\newcommand{\Ch}{{\mathrm{Ch}}}
\DeclareMathOperator{\charac}{char}
\DeclareMathOperator{\Gal}{Gal}
\newcommand{\GL}{\mathrm{GL}}
\DeclareMathOperator{\Hom}{Hom}
\newcommand{\Ind}{{\mathrm{Ind}}}
\DeclareMathOperator{\ord}{ord}
\DeclareMathOperator{\rank}{rank}
\renewcommand{\Re}{{\mathrm{Re}}}
\DeclareMathOperator{\Res}{Res}
\DeclareMathOperator{\Sym}{Sym}
\newcommand{\ov}{\overline}
\newcommand{\lra}{\longrightarrow}
\newenvironment{altenumerate}
   {\begin{list}
      {(\theenumi) }
      {\usecounter{enumi}
       \setlength{\labelwidth}{0pt}
       \setlength{\labelsep}{0pt}
       \setlength{\leftmargin}{0pt}
       \setlength{\itemsep}{\the\smallskipamount}
       \renewcommand{\theenumi}{\roman{enumi}}
      }}
   {\end{list}}
\renewcommand{\to}{%
   \ifbool{@display}{\longrightarrow}{\rightarrow}%
   }
\let\shortmapsto\mapsto
\renewcommand{\mapsto}{%
   \ifbool{@display}{\longmapsto}{\shortmapsto}%
   }
\newlength{\olen}
\newlength{\ulen}
\newlength{\xlen}
\newcommand{\xra}[2][]{%
   \ifbool{@display}%
      {\settowidth{\olen}{$\overset{#2}{\longrightarrow}$}%
       \settowidth{\ulen}{$\underset{#1}{\longrightarrow}$}%
       \settowidth{\xlen}{$\xrightarrow[#1]{#2}$}%
       \ifdimgreater{\olen}{\xlen}%
          {\underset{#1}{\overset{#2}{\longrightarrow}}}%
          {\ifdimgreater{\ulen}{\xlen}%
             {\underset{#1}{\overset{#2}{\longrightarrow}}}
             {\xrightarrow[#1]{#2}}}}%
      {\xrightarrow[#1]{#2}}
   }
\newcommand{\xyra}[2][]{%
   \settowidth{\xlen}{$\xrightarrow[#1]{#2}$}%
   \ifbool{@display}%
      {\settowidth{\olen}{$\overset{#2}{\longrightarrow}$}%
       \settowidth{\ulen}{$\underset{#1}{\longrightarrow}$}%
       \ifdimgreater{\olen}{\xlen}%
          {\mathrel{\xymatrix@M=.12ex@C=3.2ex{\ar[r]^-{#2}_-{#1} &}}}%
          {\ifdimgreater{\ulen}{\xlen}%
             {\mathrel{\xymatrix@M=.12ex@C=3.2ex{\ar[r]^-{#2}_-{#1} &}}}
             {\mathrel{\xymatrix@M=.12ex@C=\the\xlen{\ar[r]^-{#2}_-{#1} &}}}}}%
      {\mathrel{\xymatrix@M=.12ex@C=\the\xlen{\ar[r]^-{#2}_-{#1} &}}}%
   }
\newcommand{\xla}[2][]{%
   \ifbool{@display}%
      {\settowidth{\olen}{$\overset{#2}{\longleftarrow}$}%
       \settowidth{\ulen}{$\underset{#1}{\longleftarrow}$}%
       \settowidth{\xlen}{$\xleftarrow[#1]{#2}$}%
       \ifdimgreater{\olen}{\xlen}%
          {\underset{#1}{\overset{#2}{\longleftarrow}}}%
          {\ifdimgreater{\ulen}{\xlen}%
             {\underset{#1}{\overset{#2}{\longleftarrow}}}
             {\xleftarrow[#1]{#2}}}}%
      {\xleftarrow[#1]{#2}}
   }
\newcommand{\isoarrow}{%
   \ifbool{@display}{\overset{\sim}{\longrightarrow}}{\xrightarrow\sim}%
   }
\renewcommand{\lra}{%
   \ifbool{@display}{\longleftrightarrow}{\leftrightarrow}%
   }
\DeclareFontFamily{U}{matha}{\hyphenchar\font45}
\DeclareFontShape{U}{matha}{m}{n}{
      <5> <6> <7> <8> <9> <10> gen * matha
      <10.95> matha10 <12> <14.4> <17.28> <20.74> <24.88> matha12
      }{}
\DeclareSymbolFont{matha}{U}{matha}{m}{n}
\DeclareFontFamily{U}{mathx}{\hyphenchar\font45}
\DeclareFontShape{U}{mathx}{m}{n}{
      <5> <6> <7> <8> <9> <10>
      <10.95> <12> <14.4> <17.28> <20.74> <24.88>
      mathx10
      }{}
\DeclareSymbolFont{mathx}{U}{mathx}{m}{n}
\DeclareMathSymbol{\obot}         {2}{matha}{"6B}
\newtheorem{theorem}{Theorem}[section]
\newtheorem{lemma}[theorem]{Lemma}
\newtheorem {conjecture}[theorem]{Conjecture}
\theoremstyle{definition}
\newtheorem{definition}[theorem]{Definition}
\newtheorem{remark}[theorem]{Remark}
\numberwithin{equation}{theorem}
\renewcommand{\@secnumfont}{\bfseries}
\renewcommand\section{\@startsection{section}{2}%
  \z@{.5\linespacing\@plus.7\linespacing}{-.5em}%
  {\normalfont\bfseries}}
\newcommand{\Chh}{\Ch_{\hom}}
\DeclareMathOperator{\HH}{H}
\newcommand{\Qlb}{\overline{\mathbb{Q}}_\ell}
\title{A note on Tate's conjectures for abelian varieties}
\author[Chao Li]{Chao Li}
\address{Columbia University, Department of Mathematics, 2990 Broadway,	New York, NY 10027, USA}
\email{chaoli@math.columbia.edu} 
\author[Wei Zhang]{Wei Zhang}
\address{Massachusetts Institute of Technology, Department of Mathematics, 77 Massachusetts Avenue, Cambridge, MA 02139, USA}
\email{weizhang@mit.edu}
\date{\today}
\subjclass[2020]{11G40, 14G10 (primary), 11G10, 14C25 (secondary)}
\keywords{Tate conjecture, abelian varieties, algebraic cycles, poles of zeta functions, potential automorphy}
\begin{document}

\maketitle{}

\begin{abstract}
  In this mostly expository note, we explain a proof of Tate's two conjectures \cite{Tat65} for algebraic cycles of arbitrary codimension on certain products of elliptic curves and abelian surfaces over number fields.
\end{abstract}

\section{Statement}

Let $X$ be a smooth projective variety over a finitely generated field $F$. Let $\Ch^r(X)$ be the Chow group of codimension $r$ algebraic cycles of $X$ defined over $F$ modulo rational equivalence. Let $\ov F$ be a separable algebraic closure of $F$ and $\Gamma_F:=\Gal(\ov F/F)$. Tate  {\cite[Conjecture 1]{Tat65}} made the following far-reaching conjecture  (often known as \emph{the Tate conjecture}), relating algebraic cycles and $\Gamma_F$-invariants of the $\ell$-adic cohomology of $X$.

\begin{conjecture}[Tate I]\label{conj:Tate} For any $1\le r\le \dim X$ and for any prime $\ell\ne \charac(F)$, the $\ell$-adic cycle class map $$\Ch^r(X) \otimes \mathbb{Q}_\ell\rightarrow \HH^{2r}(X_{\ov F}, \mathbb{Q}_\ell(r))^{\Gamma_F}$$ is surjective.
\end{conjecture}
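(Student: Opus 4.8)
\medskip

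\noindent\textbf{Proof strategy.}\quad We do not address Conjecture~\ref{conj:Tate} in its stated generality; instead we outline a proof of it, together with Tate's companion conjecture on the order of the pole of the associated $L$-function, for the varieties considered in this note. After the usual reductions (the projective bundle and blow-up formulas, and the K\"unneth decomposition) the essential case is that of an abelian variety $A$ over a number field $F$ isogenous to a product $\prod_i E_i\times\prod_j B_j$ of elliptic curves $E_i$ and abelian surfaces $B_j$. Because the cokernel of the cycle class map over $F$ injects into its cokernel over any finite extension $F'$ --- apply a corestriction to cycle classes defined over $F'$ and divide by the degree $[F':F]$, using $\BQ_\ell$-coefficients --- Conjecture~\ref{conj:Tate} is unchanged under finite base change, so we may enlarge $F$ and assume all endomorphisms, polarizations and CM data of the factors are defined over $F$. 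The K\"unneth formula exhibits $\HH^{2r}(A_{\ov F},\BQ_\ell(r))$ as a $\Gamma_F$-subquotient of sums of tensor products of the $\HH^1$ of the factors with Tate twists, so the whole cohomology is governed by the two-dimensional representations of the modular forms attached to the $E_i$ and the four-dimensional symplectic representations attached to the $B_j$. Finally, by Faltings' semisimplicity theorem $\HH^1(A_{\ov F},\BQ_\ell)$ is a semisimple $\Gamma_F$-representation, hence so is each exterior power $\HH^{2r}(A_{\ov F},\BQ_\ell(r))$; consequently $\dim_{\BQ_\ell}\HH^{2r}(A_{\ov F},\BQ_\ell(r))^{\Gamma_F}$ equals the multiplicity $m$ of the trivial representation among the Jordan--H\"older constituents.

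\emph{Conjecture~\ref{conj:Tate} itself.} The Mumford--Tate conjecture is known for abelian varieties of this type (for products of elliptic curves by Serre, and in the abelian-surface and mixed cases by work of Serre and others), so under the comparison isomorphism the Tate classes in $\HH^{2r}(A_{\ov F},\BQ_\ell(r))$ coincide with $\HH^{2r}_{\mathrm{Hodge}}\otimes\BQ_\ell$. The Hodge conjecture is in turn known for the abelian varieties $A$ arising here: for products of elliptic curves it is classical, and the exotic Hodge classes of Weil type that occur on products of CM elliptic curves sharing an imaginary quadratic field, and on the corresponding products of abelian surfaces with extra endomorphisms, are algebraic by Weil's construction together with the results of Schoen and others. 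Hence the Tate classes are classes of algebraic cycles, which is Conjecture~\ref{conj:Tate}.

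\emph{The companion conjecture.} It remains to show that $-\ord_{s=1}L\bigl(\HH^{2r}(A_{\ov F})(r),s\bigr)$ equals the rank of the image of the cycle class map, which by the previous paragraph is $\dim_{\BQ_\ell}\HH^{2r}(A_{\ov F},\BQ_\ell(r))^{\Gamma_F}=m$. Writing the semisimplification as $\mathbf{1}^{\oplus m}\oplus V'$ with $V'$ containing no trivial constituent, one has $L\bigl(\HH^{2r}(A_{\ov F})(r),s\bigr)=\zeta_F(s)^m\,L(V',s)$, so it suffices to prove that $L(V',s)$ is holomorphic and non-vanishing at $s=1$. This is where (potential) automorphy enters: by modularity and potential modularity of elliptic curves over number fields, automorphy of the symmetric powers of the associated representations (Newton--Thorne), and potential automorphy of abelian surfaces (Boxer--Calegari--Gee--Pilloni), one expresses $L(V',s)$ --- after a solvable base change and then descending by Brauer induction --- as a product of Rankin--Selberg $L$-functions $L(\pi\times\pi',s)$ in which no $\pi'$ is the contragredient of the paired $\pi$; such $L$-functions are holomorphic at $s=1$ and non-vanishing there (Jacquet--Shalika, Shahidi). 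Therefore $-\ord_{s=1}L\bigl(\HH^{2r}(A_{\ov F})(r),s\bigr)=m$, which is the companion conjecture.

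The main obstacle is the last step: organizing the K\"unneth decomposition and the attached automorphic representations finely enough to verify holomorphy and non-vanishing of $L(V',s)$ at $s=1$. This is most delicate for the $\GSp_4$-constituents coming from the abelian surfaces, where it requires the precise structure of the spin and standard $L$-functions of the potential automorphic lift, including the endoscopic and CAP possibilities, and then a careful descent from the solvable extension over which potential automorphy is established back down to $F$, compatibly with the corestriction reduction used for the cycles and without creating or cancelling poles. A secondary point is checking that the invoked cases of the Mumford--Tate and Hodge conjectures genuinely cover the precise products at hand, in particular products mixing elliptic-curve and abelian-surface factors and powers carrying Weil-type classes. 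Alternatively one can bypass the Hodge conjecture, at the cost of more bookkeeping, by matching each pole of $L\bigl(\HH^{2r}(A_{\ov F})(r),s\bigr)$ directly with an explicit algebraic cycle --- a cup product of divisor classes, or a split-Weil class --- thereby deducing Conjecture~\ref{conj:Tate} and its companion simultaneously from Faltings' theorems and the automorphy inputs.
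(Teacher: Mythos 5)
The statement in question is Tate's conjecture itself, which the paper states but of course does not prove in general; what the paper proves are Theorems~\ref{thm:main1} and \ref{thm:main2}, and your write-up is really a sketch of those. Read against the paper's own proofs, your overall architecture is the same one: for Tate~I, combine the Mumford--Tate conjecture with the Hodge conjecture via the standard implication; for Tate~II, combine Faltings' semisimplicity, a K\"unneth decomposition, potential automorphy after a solvable base change, Brauer induction, and the Jacquet--Shalika/Shahidi results on Rankin--Selberg $L$-functions at $s=1$. That is exactly what Sections~2--4 of the paper do.

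Two substantive remarks. First, your opening reduction --- enlarging $F$ by a finite extension using corestriction --- is not needed for Tate~I in the form proved here: the inputs \cite[Thm.~3.15]{RMar08} (Hodge for products of abelian varieties with simple factors of dimension $\le 2$) and \cite[Cor.~1.2]{Lom16} (Mumford--Tate) already apply over $F$, and the implication Hodge $+$ MTC $\Rightarrow$ Tate~I (\cite[\S6]{CFar16}) is stated over $F$. Your worry about ``exotic Hodge classes of Weil type'' is also dissolved by \cite{RMar08}: for the abelian varieties in Theorem~\ref{thm:main1} all Hodge (hence Tate) classes are generated by products of divisor classes, so there are no genuinely exotic classes to worry about. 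Second, and more importantly, the step you flag as ``the main obstacle'' --- controlling holomorphy and non-vanishing of the Rankin--Selberg factors, ``including the endoscopic and CAP possibilities'' for the $\GSp_4$ pieces --- is precisely what is left open in your sketch. The paper's Definition~\ref{def:strong} of \emph{$S$-strong potential automorphy} is the device that closes this gap: it packages the requirement that the cuspidal constituents of the isobaric sum have \emph{irreducible} $\ell$-adic Galois constituents for $\ell\in S$, and Lemma~\ref{lem:pole} then turns the pole count at $s=1$ into a Frobenius-reciprocity computation of $\dim V_\ell^{\Gamma_F}$ using only standard properties of $L$-functions of cuspidal representations of $\GL_n$. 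Your sketch invokes ``modularity,'' ``symmetric power functoriality (Newton--Thorne)'' and ``potential automorphy of abelian surfaces (BCGP)'' but never records the irreducibility input, and without it the equality $-\ord_{s=1}L(V,s)=\dim V_\ell^{\Gamma_F}$ is not forced: one only gets an inequality. If you want to make your argument precise you should either adopt the paper's ``strongly potentially automorphic'' formalism, or independently verify that the relevant $\ell$-adic constituents are irreducible (which, as the paper remarks after Lemma~\ref{lem:automorphy}, can likely be done for all $\ell$ in these cases, but is not automatic). Your proposed alternative at the end --- matching poles to explicit cycles, bypassing the Hodge conjecture --- is a reasonable programme but is not what the paper does, and the bookkeeping it requires (in particular for split-Weil classes on powers of abelian surfaces) is itself nontrivial.
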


 Let $\Chh^r(X)$ be the quotient group of $\Ch^r(X)$ modulo $\ell$-adic homological equivalence. It is further conjectured (and known when $\charac(F)=0$) that $\Chh^r(X)$ is independent of $\ell$, and the $\ell$-adic cycle class map is injective on $\Chh^r(X) \otimes \mathbb{Q}_\ell$ (see \cite[p.97]{Tat65}). In particular, when $\charac(F)=0$, Tate I implies an isomorphism $\Chh^r(X) \otimes \mathbb{Q}_\ell \simeq  \HH^{2r}(X_{\ov F}, \mathbb{Q}_\ell(r))^{\Gamma_F}$ and thus 
 \begin{equation}\label{eq:Chh}
   \rank \Chh^r(X)=\dim\HH^{2r}(X_{\ov F}, \mathbb{Q}_\ell(r))^{\Gamma_F}
 \end{equation}
for any prime $\ell$.

Tate {\cite[Conjecture 2]{Tat65}} further made a conjecture relating algebraic cycles to poles of zeta functions (often known as \emph{the strong Tate conjecture}). When $F$ is a number field, we denote by $L(\HH^{2r}(X)(r),s)$ the (incomplete) $L$-function associated to the compatible system $\{\HH^{2r}(X_{\ov F},\Qlb(r))\}$ of $\Gamma_F$-representations, which converges absolutely for $\Re(s)>1$. Then \cite[Conjecture 2]{Tat65} specializes to the following.

\begin{conjecture}[Tate II] Assume that $F$ is a number field.  Then  for any $1\le r\le \dim X$,  $$\rank\Chh^r(X)=-\ord_{s=1}L(\HH^{2r}(X)(r),s).$$
\end{conjecture}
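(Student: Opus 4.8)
The plan is to establish Tate~II, together with Conjecture~\ref{conj:Tate} (Tate~I), in the case treated in this note, namely when $X=\prod_i A_i$ is a product of elliptic curves and abelian surfaces over the number field $F$. There are two essentially independent ingredients: an algebraic one, proving Tate~I---whence, by \eqref{eq:Chh}, the identity $\rank\Chh^r(X)=\dim\HH^{2r}(X_{\ov F},\Qlb(r))^{\Gamma_F}$---and an analytic one, computing $\ord_{s=1}L(\HH^{2r}(X)(r),s)$. Both sides of the desired identity are additive over a direct sum decomposition of $\HH^{2r}(X_{\ov F},\Qlb(r))$, so the whole problem reduces to understanding the pieces cut out by the Künneth formula.

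For Tate~I, I would write $\HH^{2r}(X_{\ov F},\Qlb(r))$ as a direct sum of Tate twists of tensor products $\bigotimes_i\Lambda^{a_i}\HH^1(A_{i,\ov F})$ and show that in each such piece the space of $\Gamma_F$-invariants is spanned by products of divisor classes. For the factors without complex multiplication, the Zariski closure of the image of $\Gamma_F$ in $\GL(\HH^1(A_{i,\ov F}))$ is as large as the endomorphisms and the Weil pairing allow---this is the Mumford--Tate conjecture, known for abelian varieties of dimension $\le 2$, combined with Faltings' semisimplicity theorem and the Tate conjecture for $\HH^1\otimes\HH^1$ (endomorphisms and divisors on abelian varieties). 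It follows that every invariant in $\bigotimes_i\Lambda^{a_i}\HH^1(A_{i,\ov F})$ is built out of the invariant alternating forms (classes of polarizations) and the $\Gamma_F$-equivariant isomorphisms between isogenous factors (classes of graphs of isogenies), all of which are algebraic; hence the cycle class map is surjective. The factors with extra endomorphisms---real or quaternionic multiplication, complex multiplication, or decomposition up to isogeny into elliptic curves---are dealt with by a short case analysis, reducing again to Faltings or, in the CM case, to the Hodge conjecture for abelian varieties of CM type.

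For the analytic ingredient, the Künneth decomposition expresses $L(\HH^{2r}(X)(r),s)$ as a finite product of $L$-functions of pure weight-$0$ pieces $M=\bigl(\bigotimes_i\Lambda^{a_i}\HH^1(A_{i,\ov F})\bigr)(r)$. For each $M$ I would invoke potential automorphy: after a finite base change the representations $\HH^1$ of the elliptic-curve factors and their symmetric powers, and the $\GSp_4$-type representations $\HH^1$ of the abelian-surface factors, become automorphic---by the (potential) automorphy theorems of Newton--Thorne for symmetric powers and of Boxer--Calegari--Gee--Pilloni for abelian surfaces, over the appropriate number fields. Brauer induction then writes $L(M,s)=\prod_j L(\pi_j\otimes\chi_j,s)^{n_j}$ as an integer-power product of automorphic $L$-functions over subfields $F_j$; the cuspidal $\GL_n$ ($n\ge 2$) and non-trivial Hecke factors are entire (Godement--Jacquet) and non-vanishing at $s=1$ (Jacquet--Shalika), so they do not affect $\ord_{s=1}$, while the trivial-character $\GL_1$ factors contribute Dedekind zeta factors $\zeta_{F_j}(s)^{n_j}$ with simple poles at $s=1$. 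Because Brauer induction computes the multiplicity of the trivial representation compatibly with restriction, collecting these factors gives $-\ord_{s=1}L(M,s)=\dim M^{\Gamma_F}$; summing over the Künneth pieces and comparing with the first ingredient proves Tate~II.

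The main obstacle is the exact analytic control in the final step: one needs not just meromorphic continuation but the precise pole order at $s=1$, and since Brauer induction introduces negative exponents one must know that no $L(\pi_j\otimes\chi_j,s)$ with $\pi_j$ cuspidal acquires a zero or pole there, and that---after a sufficiently large base change---the complement of the trivial part of $M$ has all its irreducible constituents cut out by cuspidal automorphic representations. Packaging the potential automorphy theorems so that cuspidality and non-triviality survive all the symmetric-, exterior-, and tensor-power operations occurring among the $\HH^1(A_i)$, and in particular handling the abelian surfaces with extra endomorphisms, is where the real work lies; the reduction to divisors in Tate~I, once the special endomorphism cases are organized, is a comparatively routine consequence of Faltings' theorem and the Mumford--Tate conjecture in dimension $\le 2$.
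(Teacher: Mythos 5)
Your plan is essentially the paper's: deduce $\rank\Chh^r(X)=\dim\HH^{2r}(X_{\ov F},\Qlb(r))^{\Gamma_F}$ from Tate~I and the injectivity/independence of the cycle class map, K\"unneth-decompose into twisted tensor products of $\Sym^k\HH^1(E_i)$ and $\HH^k(A)$, invoke potential automorphy (\cite{ACC+18}, \cite{NT21}, \cite{BCGP21}), apply Brauer induction, and then compute $\ord_{s=1}$ from nonvanishing of cuspidal $L$-functions at $s=1$. Two points of divergence are worth noting. For Tate~I you propose to re-derive algebraicity of invariants in each $\bigotimes_i\Lambda^{a_i}\HH^1(A_i)$ by a direct case analysis via Faltings and the Mumford--Tate conjecture; the paper instead cites Ram\'on Mar\'{\i}'s theorem on the Hodge conjecture for such products plus Lombardo's Mumford--Tate theorem and the standard implication $\text{MT}+\text{Hodge}\Rightarrow\text{Tate I}$---your route is heavier but would land in the same place, since the content of your case analysis is precisely the Hodge-conjecture statement. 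For Tate~II, the gap you correctly flag as the ``main obstacle'' is exactly where the paper's work is, and your plan as written does not resolve it: the potential automorphy theorems you cite produce automorphy of $\Sym^k\HH^1(E_i)$ and of $\HH^k(A)$ individually (the latter by \cite[Theorem~9.3.1]{BCGP21} \emph{and its proof}, which covers $\HH^k(A)$ for all $k$, not only $\HH^1$), but they do not directly make the K\"unneth tensor factors automorphic. The paper gets around this in two ways at once: it treats each K\"unneth piece as a tensor $V\otimes W$ of two compatible systems and uses \emph{Rankin--Selberg} $L$-functions after Brauer induction (so only $V$ and $W$, not $V\otimes W$, need to become automorphic after base change), and when three or four elliptic-curve factors appear it first folds them down to two using the Ramakrishnan $\GL(2)\times\GL(2)\to\GL(4)$ and Kim--Shahidi $\GL(2)\times\GL(3)\to\GL(6)$ functorial products---which is why the exponents $n_i$ in Theorem \ref{thm:main2}(ii)--(iii) are capped. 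Finally, to make the Brauer-induction count of trivial constituents equal $\dim M^{\Gamma_F}$, the paper isolates the irreducibility requirement you allude to in the explicit notion of \emph{$S$-strong potential automorphy} (Definition~\ref{def:strong}\ref{item:irr}), demanding that the $\ell$-adic Galois constituents matched to cuspidal $\pi_i$ be irreducible for $\ell\in S$; without this packaging the final counting step does not go through.
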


Tate I for divisors ($r=1$) is known for various $X$, including abelian varieties over any finitely generated fields (\cite{Fal83,  Zar75, Tat66}). Much less is known when $r>1$. We refer to the surveys \cite{Tot17,Mil07, Tat94,Ram89} for a nice summary of known results. The goal of this short note is to present some examples of abelian varieties $X$ over number fields for which Tate's conjectures hold for algebraic cycles in {\em arbitrary} codimension $r$. 

\begin{theorem}[Tate I]\label{thm:main1}Assume that $F$ is finitely generated with $\charac(F)=0$. Then Tate I holds for any abelian variety $X$ over $F$ with simple factors all having dimension $\le 2$.
\end{theorem}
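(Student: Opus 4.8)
The plan is to reduce Conjecture~\ref{conj:Tate} for such an $X$ to two statements that become classical once every simple factor of $X$ has dimension $\le 2$ --- the Mumford--Tate conjecture and the fact that $X$ is of Lefschetz type --- together with the already known case $r=1$. Write $V:=\HH^1(X_{\ov F},\BQ_\ell)$. As $X$ is an abelian variety, cup product identifies $\HH^\bullet(X_{\ov F},\BQ_\ell)$ with the exterior algebra $\wedge^\bullet V$ as a graded $\Gamma_F$-algebra. Let $G_\ell\subseteq\GL(V)$ be the Zariski closure of the image of $\Gamma_F$; then the space of Tate classes in $\HH^{2r}(X_{\ov F},\BQ_\ell(r))$ is the invariant subspace $\bigl(\wedge^{2r}V(r)\bigr)^{G_\ell}$, and it suffices to show this space is spanned by products of classes of divisors defined over $F$.

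I would first note that Conjecture~\ref{conj:Tate} for $X$ is unchanged under a finite extension $F'/F$: if $z'\in\Ch^r(X_{F'})\otimes\BQ_\ell$ has $\Gamma_F$-invariant cycle class, then the push-forward of $z'$ along $X_{F'}\to X_F$, divided by $[F':F]$, lies in $\Ch^r(X_F)\otimes\BQ_\ell$ and has the same class. Enlarging $F$, I may therefore assume $G_\ell$ is connected, every endomorphism of $X_{\ov F}$ is defined over $F$, and $\Gamma_F$ acts trivially on the finitely generated group $\NS(X_{\ov F})$ --- so that every geometric divisor class on $X$ is $F$-rational. By \cite{Fal83}, $V$ is a semisimple $\Gamma_F$-module, hence so is $\wedge^\bullet V$ and $G_\ell$ is reductive; and by \cite{Fal83,Zar75,Tat66} the case $r=1$ of Conjecture~\ref{conj:Tate} holds, so the Tate classes of degree $2$ are spanned by divisor classes.

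The two substantive inputs are: (i) the \emph{Mumford--Tate conjecture} for $X$, that $G_\ell=\mathrm{MT}(X)_{\BQ_\ell}$ (both connected after the base change above), identifying $\bigl(\wedge^{2r}V(r)\bigr)^{G_\ell}$ with the $\BQ_\ell$-span of the Hodge classes on $X_\BC$; and (ii) that $X$ is \emph{of Lefschetz type}, i.e.\ every Hodge class on $X_\BC$ is a polynomial in divisor classes. For simple factors of dimension $\le 2$ both are classical: the Mumford--Tate conjecture is Serre's theorem for an elliptic curve and, for a simple abelian surface, follows from the explicit determination of the possible Hodge and $\ell$-adic monodromy groups according to the endomorphism algebra; the Lefschetz-type property is immediate in dimension $\le 2$, and the absence of exceptional Hodge classes on a product of such factors is classical (Tankeev, Hazama, Moonen--Zarhin, among others). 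Granting (i) and (ii), every Tate class on $X$ is a polynomial in divisor classes, each algebraic and, after the reductions, $F$-rational; hence the cycle class map is surjective, which is Conjecture~\ref{conj:Tate} for $X$.

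The step I expect to be the genuine obstacle is (i) in the product case $X=\prod_i A_i$: the Mumford--Tate conjecture is not inherited by products for free, since one must rule out unexpected correlations among the representations $\HH^1(A_{i,\ov F},\BQ_\ell)$. One needs the image of $\Gamma_F$ in $\prod_i G_\ell(A_i)$ to fill out the fibre product of the $G_\ell(A_i)$ over their common similitude quotients; this is a Goursat-type argument, with inputs the classification of the individual $G_\ell(A_i)$ and Faltings' theorem --- the latter ensuring the $\HH^1(A_{i,\ov F},\BQ_\ell)$ have no common subquotients, hence the $G_\ell(A_i)$ no unexpected common quotients. When all $\dim A_i\le 2$ the list of possible groups is short enough that the only common quotients are tori accounted for by the cyclotomic character, so the argument closes; this is exactly where the hypothesis on dimensions is used. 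Everything else --- the exterior-algebra description of cohomology, semisimplicity, the reduction to divisor classes, and the descent to $F$ --- is formal once (i) and (ii) are in hand.
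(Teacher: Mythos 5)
Your proposal follows essentially the same route as the paper: reduce Tate~I to the Mumford--Tate conjecture for $X$ together with the Lefschetz-type property (equivalently, the Hodge conjecture for $X_\BC$), both of which hold when all simple factors of $X$ have dimension at most~$2$, and then invoke the known divisor case. The paper simply cites \cite{RMar08} for the Hodge input, \cite{Lom16} for the Mumford--Tate input, and \cite{CFar16} for the formal implication, whereas you sketch the reduction to $F$-rational divisor classes and the Goursat-type argument for products, which is precisely the content of Lombardo's theorem.
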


\begin{theorem}[Tate II]\label{thm:main2}
  Assume that $F$ is a number field.  Let $E_1,E_2,E_3,E_4$ be elliptic curves over $F$. Let $A$  be an abelian surface over $F$. Then Tate II  holds for the following cases:
    \begin{altenumerate}
    \item $F$ is totally real or imaginary CM and $X=E_1^{n_1}\times E_2^{n_2}$ for any $n_1\ge1 ,n_2\ge0$,
    \item $F$ is totally real or imaginary CM and $X=E_1^{n_1}\times E_2^{n_2}\times E_3$ for any $n_1\ge1$ and $1\le n_2\le2$.
    \item $F$ is totally real or imaginary CM and $X=E_1^{n_1}\times E_2^{n_2}\times E_3\times E_4$ for any $1\le n_1, n_2\le2$.
    \item $F$ is totally real and $X=A$, $X=A^2$.
    \end{altenumerate}
\end{theorem}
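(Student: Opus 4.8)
The plan is to deduce Theorem~\ref{thm:main2} from Theorem~\ref{thm:main1} together with the analytic theory of automorphic $L$-functions. Fix a prime $\ell$ and set $V:=\HH^{2r}(X_{\ov F},\Qlb(r))$, a $\Gamma_F$-representation pure of weight $0$. In each case (i)--(iv) the abelian variety $X$ has all simple factors of dimension $\le 2$, so Theorem~\ref{thm:main1} gives Tate~I for $X$, and since $\charac(F)=0$ this yields \eqref{eq:Chh}: $\rank\Chh^r(X)=\dim V^{\Gamma_F}$. Thus it remains to prove $-\ord_{s=1}L(\HH^{2r}(X)(r),s)=\dim V^{\Gamma_F}$. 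Since $\HH^1(X_{\ov F},\Qlb)$ is a semisimple $\Gamma_F$-module (Faltings), the Zariski closure of the image of $\Gamma_F$ in $\GL(\HH^1)$ is reductive, so $V$ and all its tensor constructions are semisimple; write $V=\Qlb^{\oplus m}\oplus V'$ with $m=\dim V^{\Gamma_F}$ and $V'$ containing no trivial constituent. Then $L(\HH^{2r}(X)(r),s)=\zeta_F(s)^m\,L(V',s)$, and as $\zeta_F(s)$ has a simple pole at $s=1$, the theorem reduces to showing that $L(V',s)$ continues meromorphically to a neighbourhood of $s=1$ and is there holomorphic and nonvanishing.

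I would next describe $V'$ explicitly. By Künneth and the decompositions of tensor powers of $\HH^1(E)$ into Tate twists of $\Sym^{k}\HH^1(E)$ (and, in case (iv), of tensor powers of $\HH^1(A)$ into Tate twists of Schur functors $S_\lambda\HH^1(A)$), after a finite base change --- over which all isogenies among the elliptic factors and all endomorphisms become defined --- every irreducible constituent of $V'$ is, up to a Tate twist making it pure of weight $0$, a nontrivial summand of a tensor product
\begin{equation*}
  \Bigl(\,\bigotimes_{i}\Sym^{k_i}\HH^1(E_i)\,\Bigr)\otimes S_\lambda\HH^1(A),
\end{equation*}
the $E_i$ being pairwise non-isogenous. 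The role of the numerical hypotheses is that in cases (i)--(iii) the exponents $k_i$ attached to $E_2,E_3,E_4$ satisfy $k_i\le n_i\le 2$ (and also $k_1\le n_1\le 2$ in case (iii)), so that such a product can be grouped into at most two tensor factors, each of which becomes automorphic after base change (via functoriality from $\GL_2\times\GL_2$ to $\GL_4$ and from $\GL_2\times\GL_3$ to $\GL_6$), reducing its $L$-function to a Rankin--Selberg $L$-function; in case (iv) there is the single surface $A$, and one must treat the Schur functors of its $4$-dimensional $\HH^1$ and the $\GL_4\times\GL_2$ plethysm for $A^2$.

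The crucial input is (potential) automorphy of the factors together with the standard analytic properties of Rankin--Selberg and Langlands--Shahidi $L$-functions. Every elliptic curve over a totally real or CM field is potentially modular (Wiles; Taylor--Wiles; Freitas--Le Hung--Siksek; Taylor; Barnet-Lamb--Gee--Geraghty--Taylor; and, over CM fields, Allen--Calegari--Caraiani--Gee--Helm--Le Hung--Newton--Scholze--Taylor--Thorne), all symmetric powers of the associated $\GL_2$-representations are potentially automorphic (Gelbart--Jacquet for $\Sym^2$; Newton--Thorne, and the potential-automorphy theorems for symmetric powers over totally real and CM fields), and every abelian surface over a totally real field is potentially modular (Boxer--Calegari--Gee--Pilloni), so that its $\HH^1$ is the spin transfer of a $\GSp_4$-automorphic representation and the relevant Schur functors transfer to general linear groups (Arthur; Kim's exterior square functoriality from $\GL_4$ to $\GL_6$; the adjoint $L$-function via Langlands--Shahidi). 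Using also automorphic induction from $\GL_2\times\GL_2$ to $\GL_4$ and from $\GL_2\times\GL_3$ to $\GL_6$ (Ramakrishnan; Kim--Shahidi), one checks case by case that, after base change to a suitable $F'$ of the same type, the $L$-function of each nontrivial constituent of $V'$ is a product --- possibly a quotient, with denominators holomorphic and nonvanishing at $s=1$ --- of Rankin--Selberg $L$-functions of isobaric automorphic representations of general linear groups; such an $L$-function is holomorphic on $\Re(s)\ge 1$ except for a possible simple pole at $s=1$, which occurs only when the two factors are contragredient to one another --- hence, here, only for a trivial constituent --- and is nonvanishing on $\Re(s)=1$ (Jacquet--Piatetski-Shapiro--Shalika; Jacquet--Shalika; Shahidi). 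A Brauer-induction argument combined with solvable base change descent (Arthur--Clozel) then propagates meromorphic continuation, holomorphy, and nonvanishing at $s=1$ from $F'$ back down to $F$, yielding $-\ord_{s=1}L(\HH^{2r}(X)(r),s)=m$.

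The main obstacle, I expect, is the bookkeeping in the last step: verifying case by case that the multi-factor tensor products really do decompose into pieces whose $L$-functions are controlled by known automorphy or by Langlands--Shahidi --- above all the $\GSp_4$-constituents of $\HH^\bullet(A)$ and $\HH^\bullet(A^2)$ in case (iv), where symmetric- and exterior-power functoriality is only partially available --- and keeping careful track of the degenerate constituents coming from elliptic curves with complex multiplication and from abelian surfaces with extra endomorphisms or isogenous to products, so as to be sure that none of them contributes an unexpected zero or pole at $s=1$ after base change; equivalently, that the Tate classes counted by $m$ are the sole source of the pole of $L(\HH^{2r}(X)(r),s)$ at $s=1$. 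By contrast the geometric decomposition (Künneth and plethysm) and the Brauer/base-change descent are routine.
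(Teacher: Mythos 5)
Your overall strategy matches the paper's: reduce to the pole-order computation via Theorem~\ref{thm:main1} and \eqref{eq:Chh}, decompose $V=\HH^{2r}(X_{\ov F},\Qlb(r))$ via K\"unneth and symmetric powers, invoke potential automorphy (\cite{ACC+18,BCGP21}, plus $\GL_2\times\GL_2\to\GL_4$ and $\GL_2\times\GL_3\to\GL_6$ functoriality), and descend to $F$ by Brauer induction. There is, however, a genuine gap in how you close the argument, and a place where you overcomplicate.

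The gap is in your last paragraph. You assert that Brauer induction together with solvable base change ``propagates meromorphic continuation, holomorphy, and nonvanishing at $s=1$'' for $L(V',s)$. But the Brauer decomposition has integer coefficients $c_j$ of both signs, so $L(V',s)$ is expressed as a \emph{ratio} of products of Rankin--Selberg $L$-functions, and each individual factor can (and typically will) acquire zeros and poles at $s=1$ when $V'$ is restricted to the solvable subextensions $L_j$ --- the restriction $V'|_{\Gamma_{L_j}}$ generally does pick up trivial constituents even though $V'$ has none over $F$. So holomorphy and nonvanishing do \emph{not} propagate formally; one must compute the signed count of these poles and show it vanishes. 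That computation is exactly what the paper's Lemma~\ref{lem:pole} does, via Frobenius reciprocity, and it only goes through under a precise hypothesis: that the $\ell$-adic pieces $V_{L_j,i,\ell}$ corresponding to the cuspidal isobaric factors $\pi_{L_j,i}$ are irreducible for the $\ell$ in question. This is the content of the paper's Definition~\ref{def:strong} (``$S$-strongly automorphic''), and it is the hypothesis you would need to extract from \cite{ACC+18,BCGP21} (as the paper does in Lemma~\ref{lem:automorphy}) to make your counting match the Galois side. Your proposal does not identify or address this irreducibility issue, which is the nontrivial technical point.

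Separately, your treatment of case (iv) is heavier than necessary. You do not need Schur functors of $\HH^1(A)$, spin transfers, $\GSp_4$ exterior/symmetric power functoriality, or a ``$\GL_4\times\GL_2$ plethysm'': the paper simply observes that for an abelian variety $\HH^k(A)=\wedge^k\HH^1(A)$, and that \cite[Theorem~9.3.1]{BCGP21} (and its proof) directly gives the joint $S$-strong potential automorphy of $\{\HH^{k_1}(A_{\ov F},\Qlb)\}$ and $\{\HH^{k_2}(A_{\ov F},\Qlb)\}$ for $0\le k_1,k_2\le 4$. For $X=A$ one applies Lemma~\ref{lem:pole}(i) to the single system $\HH^{2r}(A)$; for $X=A^2$ one applies K\"unneth and Lemma~\ref{lem:pole}(iii) to $\oplus_{k_1+k_2=2r}\HH^{k_1}(A)\otimes\HH^{k_2}(A)$. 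No further functoriality for $\GSp_4$ is required. Finally, a small inaccuracy: in case (i) there is no bound on $n_2$ (only $n_1\ge 1$, $n_2\ge 0$), so your statement that the exponents attached to $E_2$ satisfy $k_2\le n_2\le 2$ in cases (i)--(iii) is wrong for (i); what saves case (i) is that \emph{all} symmetric powers of $\HH^1(E_i)$ are strongly potentially automorphic, so the needed $L$-function is a genuine $\GL_{k_1+1}\times\GL_{k_2+1}$ Rankin--Selberg with no functoriality needed.
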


\begin{remark}
It is worth mentioning that the special case when $X=E^n$ is a power of an elliptic curve was considered by Tate himself \cite[p.106]{Tat65}, and played an important role in his formulation of the Sato--Tate conjecture.  
\end{remark}

Theorem \ref{thm:main1} (Tate I) can be deduced from recent theorems on the Hodge conjecture and the Mumford--Tate conjecture  (\cite{RMar08,Lom16}), as mentioned e.g. in \cite[p.284]{Moo17}.  Theorem \ref{thm:main2} (Tate II) can be deduced from more recent potential automorphy theorems (\cite{ACC+18,BCGP21}) and known cases of Langlands functionality, and should also be known to the experts.  All these ingredients are available in more generality, but to illustrate the ideas we do not aim for maximal generality in the statement of the theorems. 

\section{Proof of Theorem \ref{thm:main1} (Tate I)}

Choose an embedding $F\hookrightarrow \mathbb{C}$ and view $F$ as a subfield of $\mathbb{C}$. Since all simple factors of $X$ have dimension $\le2$, the Hodge conjecture for $X_{\mathbb{C}}$ holds (in any codimension $r$) by \cite[Theorem 3.15]{RMar08}. In fact in this case all Hodge classes on $X_\mathbb{C}$ are generated by products of divisor classes.  Also by \cite[Corollary 1.2]{Lom16}, the Mumford--Tate conjecture for $X$ holds. 

Now the desired result follows due to the well-known general fact (see e.g. \cite[\S 6]{CFar16}) that the Mumford--Tate conjecture for the abelian variety $X$ over $F$ together with the Hodge conjecture for $X_\mathbb{C}$ (in codimension $r$) implies Tate I (Conjecture \ref{conj:Tate}) for $X$ (in codimension $r$). In particular all Tate classes on $X$ are also generated by products of divisor classes.

\begin{remark}
 We refer to the references in \cite{RMar08}, \cite{Lom16} for related previous works on the Hodge and Mumford--Tate conjectures.  When $X$ is a product of elliptic curves, the Hodge conjecture was proved in \cite{Mur90} (see also \cite[Appendix~B, \S3]{Lew99}) and the same method should also apply to prove Tate I. 
\end{remark}

\section{Potential automorphy}

Let $F$ be a number field. Let $V=\{V_\ell\}$ and $W=\{W_\ell\}$ be  compatible systems of semisimple $\ell$-adic $\Gamma_F$-representations (e.g., in the sense of strictly compatible systems of $\ell$-adic representations of $\Gamma_F$ defined over $\mathbb{Q}$ of \cite[\S2.8]{BCGP21}). Recall that $V$ is  \emph{potentially automorphic} if there exists a finite Galois extension $L/F$ such that the restriction $V|_{\Gamma_L}$ is automorphic (e.g., in the sense of \cite[Definition 9.1.1]{BCGP21}). We introduce the following variants of potential automorphy.

\begin{definition}\label{def:strong}
  Let $S$ be a nonempty set of rational primes. Let $L/F$ be a finite Galois extension.

  We say that $V$ is \emph{$S$-strongly automorphic over $L$}, if for any subextension $L'/F$ of $L/F$ with $L/L'$ solvable, the following conditions are satisfied:
\begin{altenumerate}
\item  $V|_{\Gamma_{L'}}$ is automorphic.
\item\label{item:irr} Let $\pi$ be an isobaric automorphic representation on $\GL_n(\mathbb{A}_{L'})$ associated to $V|_{\Gamma_{L'}}$ ($n=\dim V$ and $\mathbb{A}_{L'}$ is the ring of ad\`eles of $L'$). Write $\pi=\boxplus_{i=1}^k\pi_i$ as an isobaric direct sum of cuspidal automorphic representations on $\GL_{n_i}(\mathbb{A}_{L'})$ ($n=\sum_{i=1}^k n_i$). Write $V|_{\Gamma_{L'}}=\oplus_{i=1}^k V_{i}$ as the corresponding direct sum decomposition into compatible systems of $\Gamma_{L'}$-representations. Then the  $\ell$-adic $\Gamma_{L'}$-representation $V_{i,\ell}$ ($i=1,\ldots k$) is irreducible for any $\ell\in S$. (Notice that the irreducibility of $V_{i,\ell}$ is conjectured but not known in general).
\end{altenumerate}

We say that $V$ is \emph{$S$-strongly potentially automorphic}, if $V$ is $S$-strongly automorphic over $L$ for some finite Galois extension $L/F$. We say that $V$ is \emph{strongly potentially automorphic}, if $V$ is $S$-strongly potentially automorphic for some Dirichlet density one set $S$.

We say that $V$ and $W$ are \emph{jointly $S$-strongly potentially automorphic}, if $V$ and $W$ are both $S$-strongly automorphic over $L$ for some finite Galois extension $L/F$. We say that $V$ and $W$ are \emph{jointly strongly potentially automorphic}, if $V$ and $W$ are jointly $S$-strongly potentially automorphic for some Dirichlet density one set $S$. 
\end{definition}

\begin{lemma}\label{lem:pole} Let $V=\{V_\ell\}$ and $W=\{W_\ell\}$ be  compatible systems of semisimple $\ell$-adic $\Gamma_F$-representations. Let  $S$ be a nonempty set of rational primes.
  \begin{altenumerate} 
  \item\label{item:1}   Assume that $V$ is $S$-strongly potentially automorphic. Then $L(V,s)$ has meromorphic continuation to all of $\mathbb{C}$, and for any $\ell \in S$, $$\dim V_\ell^{\Gamma_F}=-\ord_{s=1}L(V,s).$$
  \item\label{item:2} Assume that $V$ and $W$ are jointly $S$-strongly potentially automorphic. Then $L(V \otimes W,s)$ has meromorphic continuation to all of $\mathbb{C}$, and for any $\ell \in S$, $$\dim (V_\ell \otimes W_\ell)^{\Gamma_F}=-\ord_{s=1}L(V \otimes W,s).$$
  \item\label{item:3} Assume that $V$ has a finite direct sum decomposition  $V\simeq \oplus_{i=1}^k V_i \otimes W_i$ into tensor products of compatible systems of $\Gamma_F$-representations. Assume that $V_i$ and $W_i$ are jointly $S$-strongly potentially automorphic for each $i$. Then $L(V,s)$ has meromorphic continuation to all of $\mathbb{C}$, and for any $\ell \in S$, $$\dim V_\ell^{\Gamma_F}=-\ord_{s=1}L(V,s).$$
  \end{altenumerate}
\end{lemma}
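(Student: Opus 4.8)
The plan is to reduce everything to the Rankin--Selberg theory of cuspidal automorphic representations on $\GL_n$, using Brauer induction to descend from the field $L$ over which strong automorphy holds down to $F$. First I would treat \ref{item:1}. By definition of $S$-strong automorphy over $L$, taking $L'=F$ we know $V$ is automorphic, attached to an isobaric sum $\pi=\boxplus_{i=1}^k\pi_i$ with each $\pi_i$ cuspidal on $\GL_{n_i}(\BA_F)$, and the corresponding $\Gamma_F$-summands $V_{i,\ell}$ are irreducible for $\ell\in S$. Then $L(V,s)=\prod_i L(\pi_i,s)$, which has meromorphic continuation to $\BC$ by Godement--Jacquet. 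For the order at $s=1$: by Jacquet--Shalika, $L(\pi_i,s)$ is holomorphic and nonvanishing at $s=1$ unless $\pi_i$ is the trivial representation of $\GL_1$ (i.e. $n_i=1$ and $\pi_i=\mathbf{1}$), in which case $L(\pi_i,s)=\zeta_F(s)$ has a simple pole at $s=1$; so $-\ord_{s=1}L(V,s)$ equals the number of indices $i$ with $\pi_i=\mathbf{1}$. On the Galois side, since each $V_{i,\ell}$ is irreducible, $\dim V_{i,\ell}^{\Gamma_F}$ is $1$ if $V_{i,\ell}$ is the trivial representation and $0$ otherwise, and $V_{i,\ell}$ is trivial exactly when $\pi_i=\mathbf{1}$ (matching of $L$-factors, or of Satake parameters at a density-one set of places). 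Summing gives $\dim V_\ell^{\Gamma_F}=-\ord_{s=1}L(V,s)$.

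For \ref{item:2}, I would apply the same bookkeeping to the pair $(V,W)$: over $F$ write $V\leftrightarrow\boxplus_i\pi_i$ and $W\leftrightarrow\boxplus_j\sigma_j$ with $\pi_i$, $\sigma_j$ cuspidal and the associated Galois pieces $V_{i,\ell}$, $W_{j,\ell}$ irreducible for $\ell\in S$. Then $L(V\otimes W,s)=\prod_{i,j}L(\pi_i\times\sigma_j,s)$, which has meromorphic continuation by Jacquet--Piatetski-Shapiro--Shalika, and by the same reference $L(\pi_i\times\sigma_j,s)$ has a pole at $s=1$ iff $\sigma_j\cong\widetilde\pi_i$, the pole then being simple. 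On the Galois side $\dim(V_{i,\ell}\otimes W_{j,\ell})^{\Gamma_F}=\dim\Hom_{\Gamma_F}(V_{i,\ell}^\vee,W_{j,\ell})$, which by irreducibility and Schur's lemma is $1$ if $W_{j,\ell}\cong V_{i,\ell}^\vee$ and $0$ otherwise; and $W_{j,\ell}\cong V_{i,\ell}^\vee$ corresponds to $\sigma_j\cong\widetilde\pi_i$. Summing over $(i,j)$ gives the claim. The one subtlety here is that the decompositions of $V$ and $W$ into irreducible/cuspidal pieces must be the ones provided by $S$-strong automorphy over a \emph{common} $L$; but that is exactly what ``jointly $S$-strongly potentially automorphic'' furnishes.

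Part \ref{item:3} is where the hypothesis that strong automorphy holds over all intermediate fields $L'$ with $L/L'$ solvable really gets used. Given $V\simeq\bigoplus_{i=1}^k V_i\otimes W_i$, we have $L(V,s)=\prod_i L(V_i\otimes W_i,s)$ and $\dim V_\ell^{\Gamma_F}=\sum_i\dim(V_{i,\ell}\otimes W_{i,\ell})^{\Gamma_F}$, so it suffices to prove the identity for each tensor product $V_i\otimes W_i$ separately --- and this is precisely \ref{item:2}. So \ref{item:3} is a formal consequence of \ref{item:2}. The main obstacle --- and the reason the definition is set up the way it is --- is ensuring that the automorphic descent is available not just over $L$ but over every solvable-relative subextension: this is what makes solvable base change (Arthur--Clozel) applicable, since cuspidality and the isobaric decomposition must be controlled under descent, and the irreducibility clause \ref{item:irr} in the definition is what lets Schur's lemma convert ``dimension of invariants'' into ``multiplicity of the relevant cuspidal constituent,'' matching the analytic pole count on the nose. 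I expect the bulk of the real work in the paper to be verifying, in the applications, that the Galois representations attached to the abelian varieties in Theorem \ref{thm:main2} are genuinely $S$-strongly potentially automorphic in this strong sense; the proof of the present lemma itself is essentially an assembly of standard Rankin--Selberg facts once the definitions are in place.
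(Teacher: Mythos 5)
Your opening sentence announces the right plan (``Brauer induction to descend from $L$ down to $F$''), but the proof you actually write out never carries it out, and this creates a genuine gap. In items \ref{item:1} and \ref{item:2} you take $L'=F$ and deduce that $V$ (resp.\ $V$ and $W$) is automorphic over $F$ itself. But the definition of ``$S$-strongly automorphic over $L$'' only guarantees automorphy over subextensions $L'$ with $L/L'$ \emph{solvable}; $L'=F$ is admissible only when $L/F$ is solvable, which is not assumed and is typically false in potential automorphy theorems. Your discussion of the solvable-subextension hypothesis is also misassigned: you say it ``really gets used'' in item \ref{item:3}, but \ref{item:3} is the purely formal step ($L$-function and invariants both factor over $i$); the solvable hypothesis is needed in \ref{item:1} and \ref{item:2}.

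What the paper does instead: apply Brauer's theorem to write $\mathbf{1}_{\Gamma_F}=\sum_j c_j\,\Ind_{\Gamma_{L_j}}^{\Gamma_F}\psi_j$ with $F\subseteq L_j\subseteq L$, $L/L_j$ solvable, and $\psi_j$ one-dimensional. Then $S$-strong automorphy over $L$ does apply over each $L_j$, giving isobaric decompositions with irreducible $\ell$-adic constituents; one obtains $L(V,s)=\prod_j\prod_i L(s,\pi_{L_j,i}\otimes\chi_j)^{c_j}$, reads off the pole order at $s=1$ from Jacquet--Shalika (or Rankin--Selberg in item \ref{item:2}) together with the irreducibility clause, and matches it with $\dim V_\ell^{\Gamma_F}$ via Frobenius reciprocity $\sum_j c_j\dim\Hom_{\Gamma_{L_j}}(\mathbf{1},V_\ell|_{\Gamma_{L_j}}\otimes\psi_{j,\ell})=\dim V_\ell^{\Gamma_F}$. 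Your bookkeeping of cuspidal constituents, Schur's lemma, and the Rankin--Selberg pole criterion is otherwise correct and is exactly what is used; the missing piece is the Brauer/Frobenius-reciprocity layer that makes the argument work without assuming $L/F$ solvable.
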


\begin{remark}
Lemma \ref{lem:pole} should be known to the experts and the proof idea, using Brauer's induction theorem and known properties of automorphic $L$-functions, is an old one (see e.g. \cite{Tay02,HST10,Har09}).   Notice that Item (\ref{item:1}) also follows as a special case of Item (\ref{item:3}). We keep Item (\ref{item:1}) to illustrate the ideas.
\end{remark}

\begin{proof}
  \begin{altenumerate}
  \item Let $L/F$ be a finite Galois extension such that $V$ is $S$-strongly automorphic over $L$. By Brauer's induction theorem, we may find a virtual decomposition $$\mathbf{1}_{\Gamma_F}=\sum_{j=1}^k c_j \Ind_{\Gamma_{L_j}}^{\Gamma_F} \psi_i,$$ where $c_j\in \mathbb{Z}$, $F\subseteq L_j\subseteq L$ with $L/L_j$ solvable, and $\psi_j$ is a 1-dimensional representation of $\Gal(L/L_j)$ ($j=1,\ldots,k$). Since $V$ is $S$-strongly automorphic over $L$, we know that for each $j$ there exists an isobaric direct sum of cuspidal automorphic representations $\pi_{L_j}=\boxplus_{i=1}^{m_i}\pi_{L_j,i}$ of $\GL_{n}(\mathbb{A}_{L_j})$ and a direct sum decomposition $V|_{\Gamma_{L_j}}=\oplus_{i=1}^{m_j}V_{L_j,i}$ into $\Gamma_{L_j}$-representations such that $$L(V|_{\Gamma_{L_j}},s)=L(s,\pi_{L_j}) \quad L(V_{L_j,i},s)=L(s,\pi_{L_j,i}),$$ and each $\ell$-adic representation $V_{L_j,i, \ell}$ is irreducible for any $\ell \in S$.  Here $L(s,\pi_{L_j})$ is the (incomplete) standard $L$-function as in \cite{GJ72} and has meromorphic continuation to all of $\mathbb{C}$. Hence $$L(V \otimes\Ind_{\Gamma_{L_j}}^{\Gamma_F}\psi_j,s)=L(V|_{\Gamma_{L_j}} \otimes \psi_j,s)=\prod_{i=1}^{m_j}L(V_{L_j,i} \otimes \psi_j,s)=\prod_{i=1}^{m_j}L(s,\pi_{L_j,i} \otimes \chi_j),$$ where $\chi_j$ is the automorphic character on $\GL_1(\mathbb{A}_{L_j})$ associated to $\psi_j$. It follows that $$L(V,s)=L(V \otimes \mathbf{1}_{\Gamma_F},s)=\prod_{j=1}^k\prod_{i=1}^{m_j}L(s, \pi_{L_j,i} \otimes \chi_j)^{c_j}$$ and thus $L(V,s)$ has meromorphic continuation to all of $\mathbb{C}$.

    Since $\pi_{L_j,i} \otimes \chi_j$ is cuspidal, by \cite{JS77} we know that $L(s, \pi_{L_j,i} \otimes \chi_j)$ has no zero or pole at $s=1$, unless $\pi_{L_j,i} \otimes \chi_j$ is the trivial representation in which case it has a simple pole at $s=1$. Hence $-\ord_{s=1}L(V,s)$ equals the number of trivial representations among $\pi_{L_j,i} \otimes \chi_j$ weighted by $c_j$, and so we obtain $$-\ord_{s=1}L(V,s)=\sum_{j=1}^k\sum_{i=1}^{m_j} c_j\dim\Hom_{\Gamma_{L_j}}(\mathbf{1}_{\Gamma_{L_j}}, V_{L_j,i,\ell} \otimes \psi_{j,\ell}),$$ for any $\ell\in S$ by the irreducibility of $V_{L_j,i,\ell}$. This evaluates to $$\sum_{j=1}^k c_j\dim\Hom_{\Gamma_{L_j}}(\mathbf{1}_{\Gamma_{L_j}}, V_\ell|_{\Gamma_{L_j}} \otimes \psi_{j,\ell}),$$ which by the Frobenius reciprocity equals $$\dim \Hom_{\Gamma_F}(\mathbf{1}_{\Gamma_F}, V_\ell)=\dim V_\ell^{\Gamma_F}.$$
  \item Let $L/F$ be a finite Galois extension  such that both $V$ and $W$ are $S$-strongly automorphic over $L$. By the same notation and argument in the proof of Item (\ref{item:1}), we know that for each $j$ there exists an isobaric direct sum of cuspidal representations $\pi_{L_j}=\boxplus_{i=1}^{m_j}\pi_{L_j,i}$ (resp. $\Pi_{L_j}=\boxplus_{i'=1}^{m'_j}\Pi_{L_j,i'}$), together with a corresponding decomposition into $\Gamma_{L_j}$-representations $V|_{\Gamma_{L_j}}\simeq \oplus_{i=1}^{m_j}V_{L_j,i}$ (resp. $W|_{\Gamma_{L_j}}\simeq \oplus_{i'=1}^{m'_j} W_{L_j,i'}$) such that each $\ell$-adic representation $V_{L_j,i, \ell}$ (resp. $W_{L_j,i',\ell}$) is irreducible for any $\ell \in S$. It follows that $$L(V \otimes W,s)=\prod_{j=1}^kL(V  \otimes W \otimes \mathbf{1}_{\Gamma_F},s)=\prod_{j=1}^k\prod_{i=1}^{m_j}\prod_{i'=1}^{m'_j}L(s, \pi_{L_j,i} \times (\Pi_{L_j,i'} \otimes \chi_j))^{c_j},$$ where $L(s, \pi_{L_j,i} \times (\Pi_{L_j,i'} \otimes \chi_j))$ is the (incomplete) Rankin--Selberg $L$-function as in \cite{JPS83}, and thus $L(V \otimes W,s)$ has meromorphic continuation to all of $\mathbb{C}$.

    Since  $\pi_{L_j,i}$ and $\Pi_{L_j,i} \otimes \chi_j$ are cuspidal,  we know that  $L(s, \pi_{L_j,i} \times (\Pi_{L_j,i} \otimes \chi_j))$ has no zero at $s=1$ by \cite{Sha80} (see also \cite[Lemma 3.1]{Mor85}, \cite[p. 721]{Sar04}). Also by \cite[(4.6) and (4.11)]{JS81} (see also \cite[Appendice]{MW89}, \cite[Theorem 2.4]{CP04}), it has no pole at $s=1$, unless $\pi_{L_j,i}\simeq (\Pi_{L_j,i'} \otimes \chi_j)^\vee$ in which case it has a simple pole at $s=1$. The latter happens if and only if $V_{L_j,i}\simeq (W_{L_j,i'} \otimes \psi_{j})^\vee$. Hence $$-\ord_{s=1}L(V,s)=\sum_{j=1}^k\sum_{i=1}^{m_j}\sum_{i'=1}^{m'_j}c_j \dim \Hom_{\Gamma_{L_j}}(\mathbf{1}_{\Gamma_{L_j}}, V_{L_j,i,\ell} \otimes W_{L_j,i',\ell} \otimes \psi_{j,\ell})$$ for any $\ell\in S$ by the irreducibility of $V_{L_j,i,\ell}$ and $W_{L_j,i',\ell}$. This evaluates to $$\sum_{j=1}^kc_j\dim \Hom_{\Gamma_{L_j}}(\mathbf{1}_{\Gamma_{L_j}}, (V_\ell\otimes W_\ell)|_{\Gamma_{L_j}} \otimes \psi_{j,\ell}),$$  which by the Frobenius reciprocity equals $$\dim \Hom_{\Gamma_F}(\mathbf{1}_{\Gamma_F}, V_\ell \otimes W_\ell)=\dim (V_\ell \otimes W_\ell)^{\Gamma_F}.$$
  \item It follows directly from Item (\ref{item:2}) and the factorization $L(V,s)=\prod_{i=1}^k L(V_i \otimes W_i,s)$.\qedhere
  \end{altenumerate}
\end{proof}

\begin{lemma}\label{lem:automorphy}
  Assume that $F$ is a number field.  Let $E_1,E_2,E_3,E_4$ be elliptic curves over $F$. Let $A$  be an abelian surface over $F$.
  \begin{altenumerate}
  \item\label{item:4} If $F$ is totally real or imaginary CM, then $\{\Sym^{k_1}\HH^1(E_{1,\ov F},\Qlb)\}$  and $\{\Sym^{k_2}\HH^1(E_{2,\ov F},\Qlb)\}$ are jointly strongly potentially automorphic for any $k_1, k_2\ge0$.
  \item\label{item:three} If $F$ is totally real or imaginary CM, then $\{\Sym^{k_1}\HH^1(E_{1,\ov F}, \Qlb)\}$ and $\{\Sym^{k_2}\HH^1(E_{2,\ov F}, \Qlb)\otimes \Sym^{k_3}\HH^1(E_{3,\ov F}, \Qlb)\}$ are jointly strongly potentially automorphic for any $k_1\ge0$, $0\le k_2\le 2$, and $0\le k_3\le1$.
  \item\label{item:four} If $F$ is totally real or imaginary CM, then $\{\Sym^{k_1}\HH^1(E_{1,\ov F}, \Qlb)\otimes \Sym^{k_3}\HH^1(E_{3,\ov F}, \Qlb)\}$ and\break $\{\Sym^{k_2}\HH^1(E_{2,\ov F}, \Qlb)\otimes \Sym^{k_4}\HH^1(E_{4,\ov F}, \Qlb)\}$ are jointly strongly potentially automorphic for any $0\le k_1,k_2\le2$ and $0\le k_3,k_4\le1$.
  \item\label{item:5} If $F$ is totally real, then $\{\HH^{k_1}(A_{\ov F}, \Qlb)\}$ and $\{\HH^{k_2}(A_{\ov F}, \Qlb)\}$ are jointly strongly potentially automorphic for any $0\le k_1,k_2\le 4$.
  \end{altenumerate}
\end{lemma}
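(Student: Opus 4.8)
The plan is to reduce all four parts to three substantial inputs used as black boxes — potential automorphy of symmetric powers of elliptic curves over CM fields \cite{ACC+18}, potential modularity of abelian surfaces over totally real fields \cite{BCGP21}, and the known cases of Langlands functoriality (Rankin--Selberg for $\GL_2\times\GL_2$ after Ramakrishnan and for $\GL_2\times\GL_3$ after Kim--Shahidi; the exterior square $\GL_4\to\GL_6$ after Kim; the transfer $\GSp_4\to\GL_4$) — glued together by solvable base change and descent for $\GL_n$ (Arthur--Clozel). The point of the numerical constraints $k_1,k_2\le 2$ and $k_3,k_4\le1$ in (\ref{item:three}) and (\ref{item:four}) is precisely that every tensor product one must form then corresponds to $\GL_{\le 3}\times\GL_{\le 2}$, the range in which the functorial product is available; no constraint is needed on $k_1$ in (\ref{item:4}) or (\ref{item:three}) because there $\Sym^{k_1}\HH^1(E_1)$ enters on its own and the joint condition of Definition \ref{def:strong} never requires the automorphy of a tensor product with it.

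First I would fix, once and for all, a single finite Galois extension $L/F$ and a single Dirichlet density one (in fact cofinite) set $S$ of primes, chosen simultaneously for the finitely many ``building blocks'' $\Sym^{k}\HH^1(E_{i,\ov F},\Qlb)$ and $\HH^1(A_{\ov F},\Qlb)$ that occur: when $F$ is CM I take $L$ to be CM and to contain the CM fields of any $E_i$ with complex multiplication, and when $F$ is totally real I take $L$ totally real. Over $L$ each building block is (isobaric) automorphic: by \cite{ACC+18} in the elliptic case, and by \cite{BCGP21} together with the transfer $\GSp_4\to\GL_4$ in the abelian surface case. Producing a \emph{common} such $L$ (and $S$) for all the building blocks is routine — one runs the potential automorphy argument for all the relevant residual representations at once. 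Finally, since a subfield of a CM (resp. totally real) number field is again CM or totally real (resp. totally real), every intermediate field $F\subseteq L'\subseteq L$ with $L/L'$ solvable is again of admissible type, and automorphy of each building block propagates from $L$ down to every such $L'$ by Arthur--Clozel solvable descent (the building blocks being defined over $F$, hence stable under $\Gal(L/L')$; the descended representation is pinned down via Chebotarev).

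Next I would assemble the two systems occurring in each part over every such $L'$. In (\ref{item:4}) the two systems are themselves symmetric powers, so there is nothing to add. In (\ref{item:three}) and (\ref{item:four}) I apply the functorial tensor product over $L'$ to each relevant pair $\Sym^{k_2}\HH^1(E_2),\Sym^{k_3}\HH^1(E_3)$ (and to $\Sym^{k_1}\HH^1(E_1),\Sym^{k_3}\HH^1(E_3)$, resp. $\Sym^{k_2}\HH^1(E_2),\Sym^{k_4}\HH^1(E_4)$), obtaining an isobaric automorphic representation whose standard $L$-function equals that of the tensor product (when the two curves coincide one instead uses Clebsch--Gordan and \cite{ACC+18}). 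In (\ref{item:5}), $\HH^0(A)$ and $\HH^4(A)$ are Tate twists of the trivial representation, $\HH^3(A)\cong\HH^1(A)(-1)$ by Poincar\'e duality and the polarization, and $\HH^2(A)=\wedge^2\HH^1(A)$ becomes automorphic over $L'$ by Kim's exterior square $\GL_4\to\GL_6$ applied to the $\GL_4$-transfer of $\HH^1(A)|_{\Gamma_{L'}}$ (equivalently, the standard transfer $\mathrm{PGSp}_4\to\GL_5$ on the trace-zero part, plus a character). This yields condition (i) of Definition \ref{def:strong} for both systems in each part, over $L$ and all its solvable subfields. For condition (\ref{item:irr}) I would match each isobaric cuspidal decomposition with the decomposition of the corresponding $\Gamma_{L'}$-representation into irreducibles — the matching being forced by strong multiplicity one and Chebotarev — and read off the latter from the image of Galois: the image of $\Gamma_{L'}$ in $\GL_2(\mathbb{Z}_\ell)$ is open for all but finitely many $\ell$ when $E_i$ is non-CM (Serre), so symmetric powers and the Clebsch--Gordan constituents of tensor products of such are irreducible; CM curves are handled over their CM fields, where everything is a sum of algebraic Hecke characters; and $\HH^1(A)$ and $\wedge^2\HH^1(A)$ are treated via the corresponding large-image results for abelian surfaces and the explicit decomposition of $\wedge^2$ under the ($\GSp_4$-type or smaller) image. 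Intersecting the resulting cofinite sets over the finitely many building blocks and finitely many $L'$ yields $S$.

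Granting \cite{ACC+18} and \cite{BCGP21}, the difficulty is organizational rather than new mathematics, and I see two places needing care. The first is extracting a \emph{single} $L$ and $S$ valid uniformly across all building blocks and all intermediate solvable subfields, and checking that solvable descent preserves the structure ``isobaric sum of cuspidals with irreducible Galois constituents'' and not merely the $L$-function. The second, which I expect to be the main obstacle, is part (\ref{item:5}): correctly tracking the functorial transfers out of $\GSp_4$ — to $\GL_4$ on $\HH^1(A)$ and to $\GL_6$ on $\wedge^2\HH^1(A)$, including the degenerate cases in which $A$ is isogenous to a product, carries extra endomorphisms, or has CM, so that the $\GL_4$-transfer is already an isobaric sum — and verifying throughout that the outputs are isobaric with the expected cuspidal support. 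That analysis of $\GSp_4$, rather than anything specific to elliptic curves, is where most of the genuine care will go.
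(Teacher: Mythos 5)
Your proposal is correct and follows essentially the same strategy as the paper: cite \cite{ACC+18} for joint potential automorphy of symmetric powers over CM fields (the paper uses Theorem 7.1.10, Corollary 7.1.11 and Proposition 6.5.13 to produce the common $L$ and the $S$-strong irreducibility, which is exactly the ``single $L$, single $S$, run the potential automorphy argument for all residuals at once'' step you sketch), then assemble the tensor products over each solvable $L'$ via Ramakrishnan ($\GL_2\times\GL_2$) and Kim--Shahidi ($\GL_2\times\GL_3$), and for the abelian surface case cite \cite[Theorem 9.3.1]{BCGP21} and its proof (your unpacking via the $\GSp_4\to\GL_4$ transfer, exterior square/standard $L$-function, and Poincar\'e duality is essentially what lives inside that citation). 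The only divergence is cosmetic: the paper splits the elliptic-curve case into CM/non-CM explicitly (observing CM symmetric powers are already automorphic over $F$ without base change) and also notes the alternative \cite{NT21} when $F=\mathbb{Q}$, whereas you fold the CM case into the base-change framework; and the paper is far terser on item (iv), leaning entirely on the BCGP21 black box rather than redoing the $\GSp_4$ bookkeeping.
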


\begin{proof}
  \begin{altenumerate}
  \item\label{item:7}  If one of $E_1$ or $E_2$ has CM, say $E_1$ has CM, then $\{\Sym^{k_1}\HH^1(E_{1,\ov F},\Qlb)\}$ is automorphic, as an isobaric direct sum of automorphic characters on $\GL_1(\mathbb{A}_F)$, and possibly automorphic inductions of automorphic characters on $\GL_1(\mathbb{A}_K)$ for a quadratic extension $K/F$. In particular, we know that $\{\Sym^{k_1}\HH^1(E_{1,\ov F},\Qlb)\}|_{\Gamma_L}$ is $S$-strongly automorphic over any finite Galois extension $L/F$ and any nonempty set $S$ of primes.  The result follows if $E_2$ also has CM. If $E_2$ has no CM, then  $\{\HH^1(E_{2,\ov F},\Qlb)\}$ is strongly irreducible in the sense defined before \cite[Lemma~7.1.1]{ACC+18} (i.e., for any finite extension $F'/F$, the representation $\HH^1(E_{2,\ov F},\Qlb)|_{\Gamma_{F'}}$ is irreducible for $\ell$ in a Dirichlet density one set of primes), and we can apply \cite[Corollary 7.1.11]{ACC+18} to $\{\Sym^{k_2}\HH^1(E_{2,\ov F},\Qlb)\}$ together with \cite[Proposition 6.5.13]{ACC+18} to obtain the desired joint $S$-strong potential automorphy for a Dirichlet density one set $S$ of primes.  If neither of $E_1$ and $E_2$ has CM, then the desired result follows from the more general \cite[Theorem 7.1.10]{ACC+18} together with \cite[Proposition 6.5.13]{ACC+18}. (In the case $F=\mathbb{Q}$, we may also directly apply \cite[Theorem A (non-CM case) and Theorem A.1 (CM case)]{NT21}).
  \item\label{item:8} By the same argument in Item (\ref{item:7}), there are a finite Galois extension $L/F$ and a Dirichlet density one set $S$ of primes such that $\{\Sym^{k_i}\HH^1(E_{i,\ov F},\Qlb)\}$ is $S$-strongly automorphic over $L$ for any $1\le i \le 3$. Hence by the functorial products for $\GL(2)\times \GL(2)\rightarrow\GL(4)$ (\cite[Theorem M]{Ram00}) and $\GL(2)\times\GL(3)\rightarrow\GL(6)$ (\cite[Theorem A]{KS02a}), we know that $\{\Sym^{k_2}\HH^1(E_{2,\ov F},\Qlb)\otimes \Sym^{k_3}\HH^1(E_{3,\ov F},\Qlb)\}$ is also $S$-strongly automorphic over $L$ for any $0\le k_2\le2$ and $0\le k_3\le1$. The result then follows.
  \item By the same argument in Item (\ref{item:8}), there are a finite Galois extension $L/F$ and a Dirichlet density one set $S$ of primes such that  $\{\Sym^{k_i}\HH^1(E_{i,\ov F},\Qlb)\otimes \Sym^{k_j}\HH^1(E_{j,\ov F},\Qlb)\}$ is $S$-strongly automorphic over $L$ for any $0\le k_i\le2$ and $0\le k_j\le1$, which gives the result.
  \item The result follows from \cite[Theorem 9.3.1]{BCGP21} and its proof.\qedhere
  \end{altenumerate}
\end{proof}

\begin{remark}
For each item of Lemma \ref{lem:automorphy}, the proof  supplies a  Dirichlet density one set $S$ of primes such that the joint $S$-strong potential automorphy holds. Since compatible systems in Lemma \ref{lem:automorphy} come from elliptic curves and abelian surfaces, one should also be able to prove directly that the irreducible conditions required in Definition~\ref{def:strong}~(\ref{item:irr}) hold for all primes $\ell$, and hence  the joint $S$-strong potential automorphy holds for the set $S$ of all primes. For the purpose of the proof of  Theorem \ref{thm:main2} (Tate~II) below, any nonempty $S$ suffices.
\end{remark}

\section{Proof of Theorem \ref{thm:main2} (Tate II)}  Let $1\le r\le \dim X$. Let $V=\{\HH^{2r}(X_{\ov F}, \Qlb(r))\}$. By Theorem \ref{thm:main1} (Tate I), we know from (\ref{eq:Chh}) that $\rank \Chh^r(X)=\dim V_\ell^{\Gamma_F}$ for any prime $\ell$. Thus it remains to show that $\dim V_\ell^{\Gamma_F}=-\ord_{s=1}L(V,s)$ for some prime $\ell$.

\begin{altenumerate}
\item  By the K\"unneth formula and the decomposition of $\HH^1(E_{i,\ov F}, \Qlb)^{\otimes k_i}$ into symmetric powers of  $\HH^1(E_{i, \ov F}, \Qlb)$ ($i=1,2$), we have an isomorphism of semisimple $\Gamma_F$-representations $$\HH^{2r}(X_{\ov F}, \Qlb(r))\simeq \bigoplus_{0\le k_i\le n_i\atop i=1,2}m_{k_1,k_2}\left(\Sym^{k_1}\HH^1(E_{1,\ov F}, \Qlb) \otimes \Sym^{k_2}\HH^1(E_{2,\ov F}, \Qlb)\right)({\scriptstyle\frac{k_1+k_2}{2}}),$$ where $m_{k_1,k_2}\ge0$ are certain multiplicities (nonzero only if $k_1+k_2\le 2r$ is even).   The result then follows from Lemma \ref{lem:pole} (\ref{item:3}) and Lemma \ref{lem:automorphy} (\ref{item:4}).
\item Similarly, set $n_3=1$ then we have an isomorphism of semisimple $\Gamma_F$-representations $$\HH^{2r}(X_{\ov F},\Qlb(r))\simeq \bigoplus_{0\le k_i\le n_i\atop 1\le i\le 3}m_{k_1,k_2,k_3}\left( \otimes_{1\le i\le 3}\Sym^{k_i}\HH^1(E_{i,\ov F}, \Qlb)\right)({\scriptstyle\frac{k_1+k_2+k_3}{2}}),$$ where $m_{k_1,k_2,k_3}\ge0$ are certain multiplicities (nonzero only if $k_1+k_2+k_3\le 2r$ is even). The result then follows from Lemma \ref{lem:pole} (\ref{item:3}) and Lemma \ref{lem:automorphy} (\ref{item:three}).
\item  Similarly, the result follows from Lemma \ref{lem:pole} (\ref{item:3}) and Lemma \ref{lem:automorphy} (\ref{item:four}).
\item  For $X=A$, the result follows from Lemma \ref{lem:pole} (\ref{item:1}) and Lemma \ref{lem:automorphy} (\ref{item:5}). For $X=A^2$, by the K\"unneth formula, we have  an isomorphism of semisimple $\Gamma_F$-representations $$\HH^{2r}(X_{\ov F},\Qlb(r))\simeq \bigoplus_{k_1+k_2=2r\atop 0\le k_1,k_2\le 4} (\HH^{k_1}(A_{\ov F},\Qlb) \otimes \HH^{k_2}(A_{\ov F},\Qlb))(r).$$ The result then follows from Lemma \ref{lem:pole} (\ref{item:3}) and Lemma \ref{lem:automorphy} (\ref{item:5}).
\end{altenumerate}

\begin{remark}
  When $X$ is an abelian surface of the type $\Res_{K/F}E$, where $F$ is totally real, $K/F$ is a quadratic CM extension and $E$ is an elliptic curve over $K$, Tate II was proved in \cite{Vir15} using a similar argument. We also refer to \cite{Joh17,Tay20} for more detailed analysis for $L$-functions of abelian surfaces.
\end{remark}

\subsection*{Acknowledgments} The authors are grateful to G. Boxer, F. Calegari, T. Gee and the anonymous referee for helpful comments. C.~L.~was partially supported by the NSF grant DMS-2101157.  W.~Z.~was partially supported by the NSF grant DMS-1901642.

\newcommand{\etalchar}[1]{$^{#1}$}

\end{document}